\newcommand{\D}{\mathbb{D}}
\DeclareMathOperator{\Rank}{Rank}
\newcommand{\Smax}{\mathcal{S}^{max}}
\newcommand{\Smin}{\mathcal{S}^{min}}
\newcommand{\Kmax}{K^{max}}
\newcommand{\Kmin}{K^{min}}
\newtheorem{theorem}{Theorem}[section]
\theoremstyle{definition}
\newtheorem{example}[theorem]{Example}
\theoremstyle{remark}
\newtheorem{remark}[theorem]{Remark}
\numberwithin{equation}{section}
\newtheorem{conjecture}[theorem]{Conjecture}
\newtheorem*{question}{Open question}
\newtheorem*{theorem*}{Theorem}
\newtheorem*{conjecture*}{Conjecture}
\begin{document}

\title[Vector-valued submodules on the bidisk]{Properties of vector-valued submodules on the bidisk}


\author[Bickel]{Kelly Bickel$^\dagger$}
\address{Department of Mathematics, Bucknell University, 360 Olin Science Building, Lewisburg, PA 17837, USA.}
\email{kelly.bickel@bucknell.edu}
\thanks{$\dagger$ Research supported in part by National Science Foundation
DMS grant \#1448846.}

\author[Liaw]{Constanze Liaw}
\address{CASPER and Department of Mathematics, Baylor University, One Bear Place \#97328, Waco, TX 76798, USA.}
\email{Constanze$\underline{\,\,\,}$Liaw@baylor.edu}

\keywords{model spaces, two complex variables, compressed shift, Agler decomposition, essential normality}
 \subjclass[2010]{47A13, 47A20, 46E22}

\date{\today}

\begin{abstract}
In previous work \cite{BLPreprint}, the authors studied the compressed shift operators $S_{z_1}$ and $S_{z_2}$ on two-variable model spaces $H^2(\D^2)\ominus \theta H^2(\D^2)$, where $\theta$ is a two-variable scalar  inner function. Among other results, the authors used Agler decompositions to characterize the ranks of the operators $[S_{z_j}, S^*_{z_j}]$ in terms of the degree of rational $\theta.$ In this paper, we examine similar questions for  $H^2(\D^2)\ominus \Theta H^2(\D^2)$ when $\Theta$ is a matrix-valued inner function. We extend several results from  \cite{BLPreprint} connecting $\Rank [S_{z_j}, S^*_{z_j}]$ and the degree of $\Theta$ to the matrix setting. When results do not clearly generalize, we conjecture what is true and provide supporting examples.
\end{abstract}

\maketitle

\section{Introduction}
Both Beurling's theorem on shift invariant subspaces for the Hardy space on the disk $H^2(\D)$ \cite{b48} and the model theory of Sz.-Nagy--Foia\c s (see e.g.~\cite{SzNF2010}) were of indisputable importance to central developments in function and operator theory. In this paper, we are interested in generalizations of this classical Hardy space theory to the Hardy space on the bidisk $H^2(\D^2)$. For examples, see e.g.~\cite{dg03, dg08, Rud69}. In analogy with objects important in the one-variable setting,  we consider Hilbert submodules -- namely subspaces of $H^2(\mathbb{D}^2)$ that are invariant under the Toeplitz (or shift) operators $T_{z_1}$ and $T_{z_2}$. Because of their close connections to one-variable results and the structure of inner functions, we restrict attention to submodules of Beurling-type, which are submodules of the form $\theta H^2(\D^2)$ for inner $\theta$.

Given a submodule of Beurling-type $\theta H^2(\D^2)$, one can define the associated two-variable model space $\mathcal{K}_{\theta} \equiv H^2(\mathbb{D}^2) \ominus  \theta H^2(\D^2)$. As in the one-variable setting, the compressed shift operators on these $\mathcal{K}_{\theta}$ spaces possess many interesting properties. Specifically, define
\[
S_{z_1} \equiv P_{\theta} T_{z_1}|_{\mathcal{K}_{\theta}} \ \text{ and } \  S_{z_2} \equiv P_{\theta} T_{z_2}|_{\mathcal{K}_{\theta}}, 
\]
where $P_{\theta}$ denotes the projection onto $\mathcal{K}_{\theta}.$ Interestingly, the cross commutators
$[S_{z_1}, S^*_{z_2}]$ and $[S_{z_1}, S_{z_2}]$ are related to both properties of $\theta$ and the structure of $\mathcal{K}_\theta$. See e.g.~\cite{dr00,ii06,  int04, Y01, Y02}. However, the properties of individual operators $S_{z_1}$ and $S_{z_2}$ are not as well-understood. 

One interesting result by Guo--Wang concerns rational inner functions. To state it, we first recall that the degree of a rational $\theta $ is $(m_1,m_2)$ if  $\theta=p/q$, where the polynomials $p$ and $q$ share no common factors and each $m_j$ is the maximum degree of $p$ and $q$ in $z_j$. Then, the result by Guo--Wang \cite{gw09} states that both $[S_{z_1}, S^*_{z_1}]$ and $[S_{z_2}, S^*_{z_2}]$ are compact if and only if $\theta$ is a rational inner function of degree at most $(1,1)$.   Complementing Guo--Wang's result, the authors \cite[Theorem 1.1]{BLPreprint} proved:
\begin{theorem}\label{t-rank1} Let $\theta$ be an inner function in $H^2(\mathbb{D}^2)$.
The commutator $[S_{z_1}^*,S_{z_1}]$ has rank $n$  if and only if $\theta$  is rational inner of degree $(1,n)$ or $(0,n)$.
\end{theorem}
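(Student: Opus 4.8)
The plan is to reduce the computation of $\Rank[S_{z_1}^*,S_{z_1}]$ to a rank computation for an explicit Hermitian reproducing kernel, and then to extract finiteness and the exact value from the Agler decomposition of $\theta$. First I would record the basic structural fact that $\mathcal{K}_\theta$ is invariant under $T_{z_1}^*$: since $M_\theta^*T_{z_1}^*=T_{z_1}^*M_\theta^*$ and $\mathcal{K}_\theta=\ker M_\theta^*$, we get $T_{z_1}^*\mathcal{K}_\theta\subseteq\mathcal{K}_\theta$, so that $S_{z_1}^*=T_{z_1}^*|_{\mathcal{K}_\theta}$ and the compression in the definition of $S_{z_1}^*$ disappears. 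Writing $k_w$ for the reproducing kernel of $\mathcal{K}_\theta$ at $w=(w_1,w_2)$, the family $\{k_w\}$ spans $\mathcal{K}_\theta$, and since $\langle[S_{z_1}^*,S_{z_1}]k_w,k_u\rangle=([S_{z_1}^*,S_{z_1}]k_w)(u)$, the rank of $[S_{z_1}^*,S_{z_1}]$ equals the rank of the Hermitian kernel $(w,u)\mapsto\langle[S_{z_1}^*,S_{z_1}]k_w,k_u\rangle$.

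Next I would compute the two defect operators on kernels. As $S_{z_1}$ is a contraction, $[S_{z_1}^*,S_{z_1}]=(I-S_{z_1}S_{z_1}^*)-(I-S_{z_1}^*S_{z_1})$ is a difference of positive operators, and both terms have clean closed forms. Using $S_{z_1}^*=T_{z_1}^*|_{\mathcal{K}_\theta}$ and $I-T_{z_1}T_{z_1}^*=P_0$, the projection onto the subspace of functions independent of $z_1$, I get $\langle(I-S_{z_1}S_{z_1}^*)k_w,k_u\rangle=\langle P_0k_w,P_0k_u\rangle$, where $P_0k_w=\frac{1-\overline{\theta(w)}\,\theta(0,z_2)}{1-\bar w_2 z_2}$. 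For the other defect, computing $P_{\theta H^2}(z_1k_w)$ produces a single ``slice'' vector, $P_{\theta H^2}(z_1k_w)=a(w)\,\theta(z)/(1-\bar w_2 z_2)$ with $a(w)=(\overline{\theta(w)}-\overline{\theta(0,w_2)})/\bar w_1$, whence $\langle(I-S_{z_1}^*S_{z_1})k_w,k_u\rangle=a(w)\overline{a(u)}/(1-\bar w_2 u_2)$. Thus the commutator kernel is built entirely from $\theta$, its slice $\theta(0,\cdot)$, and the one-variable Szeg\H{o} kernel in $z_2$; crucially the factor $1/(1-\bar w_1 u_1)$ has cancelled, so all surviving data lives in $H^2(\D)$ in the $z_2$-variable, and $\Rank(I-S_{z_1}S_{z_1}^*)=\dim\overline{P_0\mathcal{K}_\theta}$.

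I would then settle the anchor case and set up the Agler reduction. When $\theta=\theta(z_2)$ has $z_1$-degree $0$, $\mathcal{K}_\theta=H^2(\D)\otimes\mathcal{K}_{\theta(z_2)}$ and $S_{z_1}=T_{z_1}\otimes I$, so $[S_{z_1}^*,S_{z_1}]=(I-T_{z_1}T_{z_1}^*)\otimes I$ has rank $\dim\mathcal{K}_{\theta(z_2)}$, equal to $n$ exactly when $\theta(z_2)$ is a degree-$n$ Blaschke product, i.e.\ $\theta$ is rational inner of degree $(0,n)$. For the genuinely two-variable cases I would identify the rank of the commutator kernel with a dimension attached to the Agler decomposition $1-\overline{\theta(w)}\theta(z)=(1-\bar w_1z_1)K_1+(1-\bar w_2z_2)K_2$, passing to the maximal and minimal kernels $\Kmax,\Kmin$ and the associated spaces $\Smax,\Smin$. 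The point is that $P_0k_w$ and $a(w)$ encode precisely how $\theta$ fails to be independent of $z_1$, so the difference $(I-S_{z_1}S_{z_1}^*)-(I-S_{z_1}^*S_{z_1})$ is finite rank exactly when the two slice families $\{P_0k_w\}$ and $\{a(w)/(1-\bar w_2 z_2)\}$ span the same subspace of $H^2(\D)$ up to a finite-dimensional discrepancy.

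The hard part will be the exact bookkeeping in this regime, and it splits into two claims. First, finite rank must force rationality together with $z_1$-degree at most $1$: both defects are typically infinite rank (already for $\theta=z_1$ one has $S_{z_1}=0$, so both defects equal $I$), so one must show their difference is finite rank only when the two slice families match up to finite dimension, which I expect to force $\theta(0,\cdot)$, and hence $\theta$, to be rational of controlled $z_2$-degree, with the single-slice nature of the defects (a single application of $P_0$, resp.\ of $P_{\theta H^2}T_{z_1}$) imposing $m_1\le 1$. Second, under $m_1\le 1$ the surviving rank must equal the $z_2$-degree $m_2=n$; I would verify this by inserting the explicit polynomial (transfer-function) form of a degree-$(1,n)$ rational inner function into the commutator kernel and checking that it collapses to a rank-$n$ kernel in $z_2$, mirroring the degree-$(0,n)$ computation. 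Matching these Agler-dimension counts against $\deg\theta$, and ruling out $m_1\ge 2$ via the failure of the cancellation (equivalently, non-compactness of $[S_{z_1}^*,S_{z_1}]$ in the spirit of Guo--Wang), is where the main effort lies.
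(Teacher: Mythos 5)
Your opening computations are correct and, in fact, reproduce the mechanism the paper (and its source \cite{BLPreprint}) actually uses: $T_{z_1}^*$-invariance of $\mathcal{K}_\theta$, the identity $I-S_{z_1}S_{z_1}^*=P_\theta P_0|_{\mathcal{K}_\theta}$ with $P_0$ the projection onto $z_1$-independent functions (this is exactly what underlies the formula $[S_{z_1},S_{z_1}^*]f=-P_\theta\bigl(f(0,z_2)\bigr)$ for $f\in\mathcal{S}^{max}_1$ in the proof of Theorem \ref{thm:gen2}), the formula for $P_{\theta H^2}(z_1k_w)$, and the tensor-product anchor case $\deg\theta=(0,n)$. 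The difficulty is that everything after that is a statement of intent rather than a proof, and the two places you defer are precisely the substance of the theorem.

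Concretely: (a) For the implication $\Rank[S_{z_1}^*,S_{z_1}]=n\Rightarrow\theta$ rational of degree $(1,n)$ or $(0,n)$, your proposed mechanism is that the two slice families must "match up to a finite-dimensional discrepancy," which you expect to force $\theta(0,\cdot)$, \emph{and hence $\theta$}, to be rational. That "hence" is a non sequitur: rationality of the one-variable slice $\theta(0,z_2)$ gives no control whatsoever over $\theta$ on $\mathbb{D}^2$. The working route goes through the Agler/BSV subspaces: finite rank first yields $\dim\mathcal{H}(\Kmax_1)\le n$ (the argument of Theorem \ref{thm:gen2}, using $\mathcal{H}(\Kmax_1)=\mathcal{S}^{max}_1\ominus z_1\mathcal{S}^{max}_1$), and one then needs the separate, nontrivial facts that finite-dimensionality of an Agler kernel forces $\theta$ rational of bidegree $\le(1,n)$ and that $\deg_1\theta\le 1$ — the paper explicitly flags these as delicate scalar arguments. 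Your "single-slice nature of the defects imposes $m_1\le 1$" is not an argument; note that both defects are single-slice for \emph{every} inner $\theta$, including those with $\Rank[S_{z_1}^*,S_{z_1}]=\infty$. (b) For the exact value of the rank when $\deg\theta\le(1,n)$, you propose to "insert the transfer-function form and check it collapses to a rank-$n$ kernel." The paper's upper bound $\le n$ comes from Theorem \ref{thm:rational2}: elements of $\mathcal{H}(\Kmax_1)$ have the form $q/p$ with $\deg q\le(1,n-1)$, so the range of the commutator lands in an explicit $n$-dimensional space spanned by functions of $z_2$-degree $\le n-1$ over $p(0,z_2)$ (the proof of Theorem \ref{thm:gen1}). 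The matching lower bound — ruling out cancellation that would drop the rank below $n$ — is obtained from the converse direction, and you give no argument for it. As it stands the proposal is a correct reduction plus a plausible research plan, not a proof.
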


In this paper, we seek a generalization of Theorem \ref{t-rank1} to $d\times d$ matrix-valued inner functions $\Theta$. One-variable matrix-valued inner functions appeared in the model theory of Sz.-Nagy--Foia\c s and ever since, matrix inner functions have been frequently studied alongside scalar inner functions in both the one and two variable theory. For examples, see  \cite{bsv05, bk13, RonRkN, k11, LT, Nik-Vas_model_MSRI_1998, Nik-Vas_FunctModels_1989, w10}.  Nevertheless, many proof techniques and results become much more complicated in this matrix setting.

Before discussing our results, let us introduce several standard definitions. A $d \times d$ matrix-valued function $\Theta$ is called \emph{inner} if the entries of $\Theta$ are  holomorphic functions and
\[ \Theta(\tau) \Theta(\tau)^* = \Theta(\tau)^* \Theta(\tau) = I_{d\times d} \qquad \text{ for a.e.~}\tau \in \mathbb{T}^2. \]
The vector-valued Hardy space is given by $H^2_d(\mathbb{D}^2) \equiv H^2(\mathbb{D}^2) \otimes \mathbb{C}^d$, and 
\[ \mathcal{K}_{\Theta} \equiv H^2_d(\mathbb{D}^2) \ominus \Theta H^2_d(\mathbb{D}^2)  \]
is the vector-valued model space associated to $\Theta.$ 

In this paper, we will use decompositions of vector-valued $\mathcal{K}_{\Theta}$ spaces induced via Agler kernels to study the compressed shift operators $S_{z_1}$ and $S_{z_2}$. First, recall that positive matrix-valued kernel functions $K_1,K_2: \mathbb{D}^2 \times \mathbb{D}^2 \rightarrow M_{d}(\mathbb{C})$ are called \emph{Agler kernels} of $\Theta$ if they decompose $\Theta$ as follows
 \[ I- \Theta(z) \Theta(w)^* = (1-z_1 \bar{w}_1) K_2(z,w) + (1-z_2 \bar{w}_2)K_1(z,w), \qquad \forall \ z,w \in \mathbb{D}^2. \]
J. Agler proved the existence of Agler kernels in \cite{ag90}. Subsequent work in \cite{bsv05} gave canonical constructions of Agler kernels, which were further explored in \cite{bk13}. Here is the basic setup. Define $\Smax_1$ to be the maximal $M_{z_1}$-invariant subspace of $\mathcal{K}_{\Theta}.$ Then $\Smax_1$ is the set of functions $f$ with $z_1^k f \in \mathcal{K}_{\Theta}$ for all $k\in \mathbb{N}.$ Define $\Smin_2 = \mathcal{K}_{\Theta} \ominus \Smax_1.$ It is not hard to show that there are matrix-valued kernel functions $(\Kmax_1, \Kmin_2)$ such that
 \[ \Smin_2 = \mathcal{H} \left( \frac{\Kmin_2(z,w)}{1-z_2 \bar{w}_2} \right) \text{ and }  \Smax_1 = \mathcal{H} \left( \frac{\Kmax_1(z,w)}{1-z_1 \bar{w}_1} \right), \]
where $\mathcal{H}(K)$ indicates the Hilbert space with reproducing kernel $K$. One can also show that $(\Kmax_1, \Kmin_2)$ are Agler kernels of $\Theta.$  One can similarly define shift-invariant subspaces $\Smin_1$ and $\Smax_2$ of $\mathcal{K}_{\Theta},$ which yield Agler kernels $(\Kmin_1, \Kmax_2).$ See  \cite{bk13} for details.

Our main results concern matrix-valued inner functions whose entries are also rational functions.  We say that a rational inner matrix-valued function is of degree $(m_1,m_2),$ if $m_j$ is the maximum degree of its scalar-valued entries in $z_j$, for $j=1,2.$ We also write this as $\deg_j \Theta = m_j$ for $j=1,2.$ It is worth pointing out that if $\Theta$ is a matrix-valued rational inner function, then its determinant $\det \Theta$ is a scalar rational inner function. 

\subsection{Summary of Results.} In this paper, we partially extend the results of \cite{BLPreprint} to the matrix setting. As with \cite{BLPreprint}, we first examine the situation where $\Theta$ is a product of one-variable inner functions. In the scalar setting, this study illuminated the connections between Agler kernels and compressed shift operators and provided a roadmap for obtaining  more general results. In this matrix-setting, these product inner functions are not as helpful. Indeed, rather than illuminating general results, this preliminary study illustrates that non-commutativity makes even seemingly simple situations very complicated in the matrix setting.

Nevertheless, in Section \ref{sec:deg}, we generalize several parts of Theorem \ref{t-rank1} to matrices. First, we show that if $\Theta$ is a rational inner function of a particular degree, then its associated commutator will have finite rank. The details are as follows.

 \begin{theorem*} \ref{thm:gen1}. If $\Theta$ is a $d\times d$ matrix-valued rational inner function with $\deg \Theta \le (1,n),$ then
 \[
 \Rank [S_{z_1}, S_{z_1}^*] \le dn.
 \]
 \end{theorem*}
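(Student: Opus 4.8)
The plan is to reduce the rank computation to the geometry of the maximal $z_1$-invariant subspace $\Smax_1$ and then to count dimensions. First I would record the basic operator identities on $\mathcal{K}_\Theta$. Since $\mathcal{K}_\Theta$ is co-invariant under multiplication $M_{z_1}$, we have $S_{z_1}^* = M_{z_1}^*|_{\mathcal{K}_\Theta}$, and writing $Q = I - P_\Theta$ for the projection onto $\Theta H^2_d(\D^2)$ and $P_0$ for the projection of $H^2_d(\D^2)$ onto the functions independent of $z_1$, a direct computation gives
\[
[S_{z_1}, S_{z_1}^*] = M_{z_1}^* Q M_{z_1}\big|_{\mathcal{K}_\Theta} - P_\Theta P_0\big|_{\mathcal{K}_\Theta}.
\]
Each term on the right is typically infinite rank, so the point is to exhibit the cancellation that makes the difference finite rank.

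Next I would split $\mathcal{K}_\Theta = \Smax_1 \oplus \Smin_2$. Because $\Smax_1$ is $S_{z_1}$-invariant and $S_{z_1}$ acts on it as the pure isometry $f \mapsto z_1 f$, the commutator restricted to $\Smax_1$ equals $-P_{W}$, where $W = \Smax_1 \ominus z_1 \Smax_1$ is the wandering subspace; by the reproducing-kernel description $\Smax_1 = \mathcal{H}(\Kmax_1/(1 - z_1\bar{w}_1))$ this wandering subspace is exactly $\mathcal{H}(\Kmax_1)$. The hypothesis $\deg_1 \Theta \le 1$ enters here: writing $\Theta(z) = \Theta_0(z_2) + z_1\Theta_1(z_2)$, the operator $M_{z_1}^* Q M_{z_1}$ always produces functions of $z_1$-degree at most one and, after the backward shift, lands among $z_1$-independent functions. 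I would use this to show that the range of $[S_{z_1}, S_{z_1}^*]$ on all of $\mathcal{K}_\Theta$ -- including the $\Smin_2$ block and the off-diagonal blocks -- is still contained in a space of dimension at most $\dim \mathcal{H}(\Kmax_1)$, so that $\Rank[S_{z_1}, S_{z_1}^*] \le \dim \mathcal{H}(\Kmax_1)$.

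Finally I would bound $\dim \mathcal{H}(\Kmax_1) \le dn$. For rational inner $\Theta$ the kernel $\Kmax_1$ has finite rank, and its associated space is governed by the $z_2$-behavior of $\mathcal{K}_\Theta$; comparing with the scalar case, where $\dim \mathcal{H}(\Kmax_1) = \deg_2 \theta$, the natural matrix statement is that $\dim \mathcal{H}(\Kmax_1)$ is controlled by $\deg_2 \det \Theta$. Since $\det \Theta$ is a scalar rational inner function expressible as a sum of products of $d$ entries each of $z_2$-degree at most $n$, we have $\deg_2 \det\Theta \le dn$, which supplies the factor $d$ and completes the bound.

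The main obstacle is the middle step. In the scalar setting commutativity makes the cancellation between $M_{z_1}^* Q M_{z_1}$ and $P_\Theta P_0$ transparent and forces the full commutator onto the wandering subspace, but in the matrix setting the pieces $\Theta_0$ and $\Theta_1$ do not commute, so one must track the $\Smin_2$ and cross blocks directly. I expect this to be where the argument yields only the inequality $\Rank[S_{z_1}, S_{z_1}^*] \le dn$ rather than an exact value, consistent with the determinant-degree bound $\deg_2 \det\Theta \le dn$ being generally strict.
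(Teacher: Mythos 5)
Your overall skeleton --- decompose $\mathcal{K}_\Theta = \Smax_1 \oplus \Smin_2$, compute the commutator blockwise, and count dimensions --- matches the paper's, and both your opening operator identity and your closing estimate $\deg_2 \det\Theta \le dn$ are correct. The gap is exactly where you place it: the claim that the range of $[S_{z_1},S_{z_1}^*]$ on all of $\mathcal{K}_\Theta$ lies in a space of dimension at most $\dim\mathcal{H}(\Kmax_1)$. That claim is strictly stronger than the theorem: combined with Theorem \ref{thm:gen2} it would give $\Rank[S_{z_1},S_{z_1}^*] = \dim\mathcal{H}(\Kmax_1) = \deg_2\det\Theta$, which is precisely Conjecture \ref{conjecture} for rational $\Theta$ with $\deg_1\Theta\le 1$ --- an open problem, not something you may assume. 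Concretely, writing $\Kmax_1 = \sum_{i=1}^N f_i f_i^*/(p\bar p)$ with $N=\deg_2\det\Theta$ and $\deg f_i \le (1,n-1)$, the commutator applied to reproducing kernels of $\Smax_1$ produces projections of the functions $f_i(0,z_2)/p(0,z_2)$, while applied to kernels of $\Smin_2$ it produces projections of the functions $T_{\bar z_1}f_i(z)/p(0,z_2)$; these are two families of $N$ functions each, and nothing forces their spans to coincide. (A smaller inaccuracy: on $\Smax_1$ the commutator is $f\mapsto -P_\Theta\bigl(f(0,z_2)\bigr)$, which annihilates $z_1\Smax_1$ but is not literally $-P_W$; the rank bound on that block survives, but the operator identity does not.)

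The paper's resolution is cruder and bypasses the comparison entirely: since $\deg_1 f_i\le 1$ (this is where the hypothesis $\deg_1\Theta\le 1$ enters, via the degree bounds of Theorem \ref{thm:rational2}), all $2N$ of the functions above are $\mathbb{C}^d$-valued functions of $z_2$ alone of the form $q(z_2)/p(0,z_2)$ with $\deg q\le n-1$, and the ambient space of all such functions has dimension exactly $dn$. Embedding the range in that fixed $dn$-dimensional space gives the stated bound without needing the two families to share a span. If you replace your middle claim by this containment, your argument closes; as written, it rests on the conjecture.
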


We also study the other direction of Theorem \ref{t-rank1}. Here, several of the scalar arguments completely break down in the matrix setting. Still, we are able to conclude that if the commutator has finite rank, then a certain object associated to $\Theta$ is also finite. Specifically, we conclude the following:

 \begin{theorem*} \ref{thm:gen2}. Assume that  $\Theta$ is a $d\times d$ matrix-valued inner function with $\Rank [S_{z_1}, S^*_{z_1}] = n$. Then 
 \[  \dim \mathcal{H}(\Kmax_1) \le n.\]
 \end{theorem*}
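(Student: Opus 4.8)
\emph{Proof proposal.} The plan is to exploit the orthogonal decomposition $\mathcal{K}_{\Theta} = \Smin_2 \oplus \Smax_1$ together with the fact that $\mathcal{H}(\Kmax_1)$ is (isometrically isomorphic to) the wandering subspace $W := \Smax_1 \ominus z_1 \Smax_1$ of the shift $M_{z_1}$ restricted to $\Smax_1$; see the construction in \cite{bk13}. First I would record several elementary facts. Since $\mathcal{K}_\Theta$ is coinvariant, $S_{z_1}^* = M_{z_1}^*|_{\mathcal{K}_\Theta}$. By the defining property of $\Smax_1$, if $f \in \Smax_1$ then $z_1 f \in \Smax_1$, so $\Smax_1$ is $S_{z_1}$-invariant and $S_{z_1}|_{\Smax_1} = M_{z_1}|_{\Smax_1}$ is an isometry; consequently $\Smin_2$ is $S_{z_1}^*$-invariant. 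With respect to $\mathcal{K}_\Theta = \Smin_2 \oplus \Smax_1$ this gives $S_{z_1}$ the block lower-triangular form $S_{z_1} = \left(\begin{smallmatrix} A & 0 \\ C & B\end{smallmatrix}\right)$, where $B = S_{z_1}|_{\Smax_1}$ is an isometry. Hence $W = \ker B^* = \Smax_1 \ominus \operatorname{ran} B$ and $\dim W = \dim \mathcal{H}(\Kmax_1)$.

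Next I would compute the compression of the self-commutator to $W$. Block multiplication gives, for the $\Smax_1$-corner, $[S_{z_1}, S_{z_1}^*]_{(2,2)} = CC^* + BB^* - I$ (using $B^*B = I$). Since $I - BB^*$ is exactly the orthogonal projection onto $W$, for $g \in W = \ker B^*$ the $BB^*$ term drops out and one obtains
\[ P_W [S_{z_1}, S_{z_1}^*]\big|_{W} = -(I_W - D), \qquad D := P_W\, C C^*\big|_{W} \ge 0. \]
A compression cannot increase rank, so $\Rank(I_W - D) \le \Rank[S_{z_1}, S_{z_1}^*] = n$. Moreover, using $S_{z_1}^* = M_{z_1}^*$ and $B^* g = 0$ on $W$, one checks $\langle D g, g\rangle = \|C^* g\|^2 = \|M_{z_1}^* g\|^2 \le \|g\|^2$, whence $0 \le D \le I_W$ and $I_W - D$ is a nonnegative self-adjoint operator.

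The crux — and the step I expect to be the main obstacle — is to show that $I_W - D$ is \emph{injective}, i.e.\ that $D$ has no eigenvalue $1$. An eigenvector $g$ with $Dg = g$ satisfies $\|M_{z_1}^* g\| = \|g\|$, which forces $g \in \operatorname{ran} M_{z_1}$, say $g = z_1 h$. Since $\mathcal{K}_\Theta$ is $M_{z_1}^*$-invariant, $h = M_{z_1}^* g \in \mathcal{K}_\Theta$, and since $g \in \Smax_1$ the defining condition $z_1^k h \in \mathcal{K}_\Theta$ for all $k$ upgrades this to $h \in \Smax_1$; thus $g \in z_1 \Smax_1$. But $g \in W \perp z_1 \Smax_1$, forcing $g = 0$. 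Hence $W \cap z_1 H_d^2(\D^2) = \{0\}$ and $I_W - D$ is injective. Being an injective self-adjoint operator of rank at most $n$, its range is finite-dimensional, hence closed, so $\operatorname{ran}(I_W - D) = \overline{\operatorname{ran}(I_W - D)} = (\ker(I_W - D))^\perp = W$; therefore $\dim W \le n$. This yields $\dim \mathcal{H}(\Kmax_1) = \dim W \le n$, as claimed. The new difficulty relative to the scalar case lives entirely in the coupling operator $C$: it is precisely because $C$ need not vanish that one cannot read $\dim W$ off the $(2,2)$-corner directly, and the injectivity argument above is what controls it.
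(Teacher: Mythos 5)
Your proof is correct. At its heart it uses the same two ingredients as the paper's argument: the identification $\mathcal{H}(\Kmax_1) = \Smax_1 \ominus z_1 \Smax_1$, and the fact that a nonzero element $g$ of this wandering subspace cannot satisfy $g(0,z_2)\equiv 0$, since that would give $T_{\bar{z}_1}g \in \Smax_1$ and hence $g \in z_1\Smax_1 \perp g$ --- your injectivity step is essentially verbatim the paper's concluding paragraph. The surrounding machinery, however, is genuinely different. The paper computes $[S_{z_1},S_{z_1}^*]f = -P_{\Theta}(f(0,z_2))$ directly for $f \in \Smax_1$ and argues by contradiction: if $\dim\mathcal{H}(\Kmax_1) > n$, the self-adjointness of the commutator yields a nonzero $f \in \mathcal{H}(\Kmax_1)\cap \ker[S_{z_1},S_{z_1}^*]$, and an extra orthogonality argument is then needed (writing $f(0,z_2)=\Theta h$ and computing $\|f(0,z_2)\|^2 = \langle f,\Theta h\rangle = 0$) to upgrade $P_{\Theta}f(0,z_2)=0$ to $f(0,z_2)\equiv 0$ before the wandering-subspace step applies. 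Your block-triangular compression replaces both moves at once: the identity $\langle Dg,g\rangle = \|M_{z_1}^*g\|^2$ lets you pass from $Dg=g$ to $g(0,z_2)\equiv 0$ by a pure norm equality, with no appeal to $f \perp \Theta H^2_d(\D^2)$, and the ``injective self-adjoint finite-rank operator on $W$ is onto'' argument replaces the pigeonhole. Both routes are sound and of comparable length; the paper's is more computational and self-contained, while yours isolates the sharper structural statement that the compression of $[S_{z_1},S_{z_1}^*]$ to $\mathcal{H}(\Kmax_1)$ equals $-(I_W-D)$ with $0\le D\le I_W$ and $I_W-D$ injective, i.e.\ that this compression is negative semidefinite with trivial kernel --- slightly more information than the theorem asks for, and a form that makes clear exactly where the coupling operator $C$ enters.
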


Notice that, if $\Theta$ is rational inner, then this result paired with Theorem \ref{thm:rational2} says that $\Rank[S_{z_1},S^*_{z_1} ] =n$ implies that $\deg_2 \det \Theta \le n.$ This is much more in line with Theorem \ref{t-rank1}.
It is worth noting that  the proof of Guo--Wang's result also uses formulas specific to scalar-valued rational inner functions. For this reason, it is not immediately clear how to generalize their proofs to the matrix setting.

For several reasons, the results obtained in Theorems \ref{thm:gen1} and \ref{thm:gen2} are unsatisfactory. These reasons are discussed in detail in Remark \ref{rem:con} and lead us to the following conjecture:

 \begin{conjecture*} \ref{conjecture}. Let $\Theta$ be a $d \times d$ matrix-valued inner function on the bidisk. Then $\Rank [S_{z_1}, S_{z_1}^*] = n$ if and only if $\deg_1 \Theta \le 1$ and $\deg_2 \det \Theta = n.$ \end{conjecture*}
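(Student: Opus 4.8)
The plan is to reduce the conjecture to the two one-sided results already available and then to close the remaining gaps with a structural analysis of $S_{z_1}$ on the orthogonal decomposition $\mathcal{K}_\Theta = \Smax_1 \oplus \Smin_2$. First I would record what is in hand: Theorem \ref{thm:gen2} gives $\dim\mathcal{H}(\Kmax_1) \le \Rank[S_{z_1},S_{z_1}^*]$ whenever the latter is finite, and Theorem \ref{thm:rational2} identifies $\dim\mathcal{H}(\Kmax_1)$ with $\deg_2 \det\Theta$ for rational inner $\Theta$. Together these yield the lower bound $\Rank[S_{z_1},S_{z_1}^*] \ge \deg_2\det\Theta$, so the conjecture reduces to two implications: (I) if $\deg_1\Theta \le 1$ then $\Rank[S_{z_1},S_{z_1}^*] \le \dim\mathcal{H}(\Kmax_1)$; and (II) if $\Rank[S_{z_1},S_{z_1}^*] < \infty$ then $\deg_1\Theta \le 1$. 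Granting (I) and (II), both directions follow by chaining the inequalities: in the forward direction Theorem \ref{thm:gen1} first guarantees finiteness and then (I) supplies the matching upper bound, while in the reverse direction (II) supplies $\deg_1\Theta\le 1$ so that (I) applies.

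For (I), I would exploit that $\Smax_1$ is $S_{z_1}$-invariant while $\Smin_2 = \mathcal{K}_\Theta\ominus\Smax_1$ is $S_{z_1}^*$-invariant, which puts $S_{z_1}$ in upper-triangular block form
\[ S_{z_1} = \begin{pmatrix} A & B \\ 0 & C \end{pmatrix}, \qquad A = S_{z_1}|_{\Smax_1}, \]
with $A$ an isometry. Since $\Smax_1 \cong H^2(\D)\otimes\mathcal{H}(\Kmax_1)$ and $A$ is the shift of multiplicity $\dim\mathcal{H}(\Kmax_1)$, one has $A^*A = I$ and $\Rank(I-AA^*)=\dim\mathcal{H}(\Kmax_1)$. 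A direct computation then presents the $(1,1)$ block of $[S_{z_1},S_{z_1}^*]$ as $-(I-AA^*)+BB^*$, together with off-diagonal blocks built from $B$ and a $(2,2)$ block $[C,C^*]-B^*B$. The crux of (I) is to show that the hypothesis $\deg_1\Theta\le 1$ constrains the $z_1$-structure of $\Smin_2$ to degree one, so that the blocks $B$ and $C$ combine to contribute no rank beyond that of $-(I-AA^*)$, leaving $\Rank[S_{z_1},S_{z_1}^*]=\dim\mathcal{H}(\Kmax_1)$.

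For (II) I would use the dual description $\Smin_2 \cong H^2(\D)\otimes\mathcal{H}(\Kmin_2)$, where $\mathcal{H}(\Kmin_2)$ is a space of $z_1$-dependent functions. If $\deg_1\Theta \ge 2$, this factor genuinely carries degree-$\ge 2$ behavior in $z_1$, and the compression $C$ of $M_{z_1}$ acquires the tensor form $I\otimes\tilde C$ on the infinite-dimensional factor $H^2(\D)$. Its self-commutator is then $I\otimes[\tilde C,\tilde C^*]$ with $[\tilde C,\tilde C^*]\neq 0$, which has infinite rank unless compensated by the $B$-blocks; I would argue that when $\deg_1\Theta\ge 2$ no such compensation is possible, forcing $\Rank[S_{z_1},S_{z_1}^*]=\infty$. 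This is the matrix analog of the mechanism behind the scalar Theorem \ref{t-rank1} and of Guo--Wang's compactness criterion.

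The main obstacle will be step (I): proving the \emph{exact} attainment of the lower bound, i.e.\ that the $\Smin_2$-block and off-diagonal blocks add no new directions when $\deg_1\Theta\le 1$. In the scalar case one has explicit rational formulas for $\theta$ and its Agler kernels that make such bookkeeping transparent, but in the matrix setting the entries of $\Theta$ do not commute and $\det\Theta$ interacts with the individual entries in a complicated way, so these formulas are unavailable. Making the interplay between $B$ and $C$ precise—equivalently, showing that the sharp lower bound furnished by Theorems \ref{thm:gen2} and \ref{thm:rational2} is actually achieved—is exactly the point where the scalar techniques break down and where genuinely new matrix-valued arguments will be required.
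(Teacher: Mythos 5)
The statement you are trying to prove is stated in the paper as an open \emph{conjecture}: the paper offers no proof, only supporting examples (diagonal $\Theta$ and two non-diagonal $2\times 2$ examples in Section \ref{sec:ex}), and Remark \ref{rem:con} explicitly identifies the missing ingredients. Your proposal correctly assembles the available one-sided results --- Theorem \ref{thm:gen2} plus Theorem \ref{thm:rational2} give $\Rank[S_{z_1},S_{z_1}^*]\ge \deg_2\det\Theta$ for rational $\Theta$, and Theorem \ref{thm:gen1} gives finiteness when $\deg\Theta\le(1,n)$ --- and correctly reduces the conjecture to your implications (I) and (II). But this reduction is precisely the content of Remark \ref{rem:con}; it is where the paper stops, not where a proof begins. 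Both (I) and (II) are asserted rather than proved: for (I) you say the blocks $B$ and $C$ ``combine to contribute no rank beyond that of $-(I-AA^*)$'' and for (II) you say ``no such compensation is possible,'' and in your final paragraph you concede that establishing (I) ``is exactly the point where \dots genuinely new matrix-valued arguments will be required.'' A plan that defers the two load-bearing steps is not a proof, and as far as the paper is concerned those steps are genuinely open.

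Two further concrete gaps. First, the forward direction ($\Rank[S_{z_1},S_{z_1}^*]=n$ implies the degree conditions) requires showing that finite rank forces $\Theta$ to be \emph{rational}; otherwise Theorem \ref{thm:rational2} does not apply and $\deg_2\det\Theta$ is not even defined. In the scalar case this is part of Theorem \ref{t-rank1}, and the paper notes that its proof ``relies on scalar arguments that do not generalize to the matrix setting.'' Your tensor-structure argument for (II), even if completed, would at best control the $z_1$-degree and says nothing about rationality in $z_2$. Second, your claim in (II) that $C=S_{z_1}|_{\Smin_2}$ ``acquires the tensor form $I\otimes\tilde C$'' on $\Smin_2\cong H^2(\D)\otimes\mathcal{H}(\Kmin_2)$ is unjustified: the isomorphism $\Smin_2=\bigoplus_k z_2^k\,\mathcal{H}(\Kmin_2)$ is graded by powers of $z_2$, and compression of $M_{z_1}$ need not respect that grading fiberwise, so $[C,C^*]$ need not split as $I\otimes[\tilde C,\tilde C^*]$. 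The block bookkeeping $[S_{z_1},S_{z_1}^*]=\begin{pmatrix} -(I-AA^*)+BB^* & * \\ * & [C,C^*]-B^*B\end{pmatrix}$ is a reasonable frame for attacking the problem, but as it stands the proposal restates the conjecture rather than resolving it.
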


This conjecture is supported by several nontrivial examples detailed in Section \ref{sec:ex}.

  \subsection{Products of One-Variable Functions}\label{ss-products}
 In the scalar setting, if $\theta(z) = \phi(z_1) \psi(z_2)$ is a product of one variable inner functions, then the properties of $S_{z_j}$ and its commutator $[S_{z_j}, S^*_{z_j}]$ are well-understood for $j=1,2.$ Specifically, see \cite[Section 2]{BLPreprint} for results concerning the reducing subspaces, essential normality, and spectrum of these operators. The obtained results rest on the simple decompositions
  \[
  \begin{aligned}
  1- \phi(z_1) \psi(z_2) \overline{\psi(w_2) \phi(w_1)}
  & = \left( 1 - \psi(z_2)\overline{\psi(w_2)} \right) + \psi(z_2) \left( 1 -\phi(z_1) \overline{\phi(w_1)} \right) \overline{\psi(w_2)} \\
  & =  \left( 1 - \phi(z_1)\overline{\phi(w_1)} \right) + \phi(z_1) \left( 1 -\psi(z_2) \overline{\psi(w_2)} \right) \overline{\phi(w_1)}. 
  \end{aligned}
  \]
  Using these,  one can obtain nice formulas for the reproducing kernels of of the shift-invariant subspaces $\Smax_1, \Smin_1$ and $\Smax_2, \Smin_2$ of $\mathcal{K}_{\theta}.$ Most results follow from studying $S_{z_j}$ and $[S_{z_j}, S^*_{z_j}]$ on these well-understood subspaces.
    
  For $\Theta$ a matrix-valued product of one-variable inner functions, this method no longer works. Indeed, non-commutativity implies that such $\Theta$ could be of the form
  \begin{equation} \label{eqn:gentheta} \Theta(z) = \prod_{i=1}^N \Phi_i(z_1) \Psi_i(z_2),\end{equation}
with no apparent simplification.  Even in the simplest case $\Theta(z) = \Phi(z_1) \Psi(z_2)$, finding  reproducing kernels for $\Smax_j$ and $\Smin_j$ is complicated. Indeed, non-commutativity means that, in general,
  \[
  \begin{aligned}
  I- \Phi(z_1) \Psi(z_2) \Psi(w_2)^* \Phi(w_1)^*
  & \ne  \left( I - \Psi(z_2)\Psi(w_2)^* \right) + \Psi(z_2) \left( 1 -\Phi(z_1) \Phi(w_1)^* \right) \Psi(w_2)^*. 
  \end{aligned}
  \]
 Because of this, it is not clear how to obtain reproducing kernel formulas for the spaces $\Smax_1$ and $\Smin_2.$  In contrast, for this particular $\Theta$, the symmetric factorization does hold and so it is possible to write down formulas for the kernels of $\Smax_2$ and $\Smin_1.$ These formulas follow from the characterizations of $\Smax_j$ and $\Smin_j$ in \cite[Proposition 2.1]{bk13}.  However, for the more general $\Theta$ given in \eqref{eqn:gentheta}, it is not clear how to obtain formulas for any of the subspaces. Without the reproducing kernel formulas for $\Smax_j$ and $\Smin_j$, many of the proofs from \cite[Section 2]{BLPreprint} establishing results about the $S_{z_j}$ and $[S_{z_1}, S^*_{z_j}]$ do not generalize. This motivates the question
 
 \begin{question} Is there a method for determining the reproducing kernel formulas for $\Smax_j$ and $\Smin_j$ when $\Theta$ is a $d\times d$ matrix-valued inner function of the form \eqref{eqn:gentheta}?
  \end{question}
  
 The previous question may be asking too much. Indeed, it may be possible to establish certain results, such as the characterization of reducing subspaces, without establishing concrete formulas for the reproducing kernels. This seems especially possible since various characterizations of the spaces $\Smax_j$, $\Smin_j$ were obtained in \cite{bk13} for matrix-valued $\Theta.$ This leads to the general question:
 
  \begin{question} Do any of the results about $S_{z_j}$ and $[S_{z_j}, S^*_{z_j}]$ from \cite[Section 2]{BLPreprint} generalize to case where $\Theta$ is a $d\times d$ matrix-valued inner function of form \eqref{eqn:gentheta}?
  \end{question}
  
  These open questions indicate the complexity of many seemingly-simple problems in the matrix setting.
  
 \section{Relationship Between Degree of $\Theta$ and rank of $[S_{z_j}, S^*_{z_j}]$} \label{sec:deg}
In \cite{BLPreprint}, the authors proved Theorem \ref{t-rank1} by exploiting
connections between the degree of $\Theta$ and the structure of related subspaces $\mathcal{H}(\Kmax_j)$ and $\mathcal{H}(\Kmin_j)$. The needed connections are detailed in \cite[Theorem 3.2]{BLPreprint}. These connections do generalize to matrix-valued inner functions. To state them, recall that if $\Theta$ is a rational inner $d\times d$ matrix-valued function, then we can write 
\[ \Theta(z) = \frac{1}{p(z)} Q(z), \]
where the polynomial $p(z)$ is the least common multiple of the denominators of the entries of $\Theta$ after each entry is put in reduced form and $Q(z)$ satisfies
\[ Q(\tau) Q(\tau)^* = Q(\tau)^* Q(\tau) = |p(\tau)|^2 I \qquad \text{ for a.e.~}\tau \in \mathbb{T}^2.\]

Given this representation, we can state the following result, which generalizes \cite[Theorem 3.2]{BLPreprint} to matrix-valued inner functions. The proof is in \cite{bk13}; the degree bounds appear in \cite[Theorem $1.7$]{bk13} and dimension results appear in \cite[Theorem $1.8$]{bk13}.

\begin{theorem} \label{thm:rational2} Let $\Theta = \frac{Q}{p}$ be a $d\times d$ matrix-valued rational inner function of degree $(m,n)$. Then 
\[ 
\begin{aligned}
\dim \mathcal{H}(\Kmax_1) &= \dim \mathcal{H}(\Kmin_1) = \deg_2 \det \Theta, \\
\dim \mathcal{H}(\Kmax_2) &= \dim \mathcal{H}(\Kmin_2) = \deg_1 \det \Theta.
\end{aligned}
\]
Furthermore, if $f$ is a function in $\mathcal{H}(\Kmax_1) $ or $\mathcal{H}(\Kmin_1)$ then $f = \frac{q}{p}$ where $\deg q \le (m,n-1)$ and  if $g$ is a function in $\mathcal{H}(\Kmax_2) $ or $\mathcal{H}(\Kmin_2)$ then $g = \frac{r}{p}$, where $\deg r \le (m-1,n).$
\end{theorem}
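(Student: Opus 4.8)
The plan is to reduce everything to explicit formulas for the canonical Agler kernels of a rational inner $\Theta = Q/p$, and then count dimensions through a wandering-subspace argument. First I would clear denominators in the Agler decomposition: multiplying $I - \Theta(z)\Theta(w)^*$ by $p(z)\overline{p(w)}$ yields the polynomial identity
\[ p(z)\overline{p(w)}\,I - Q(z)Q(w)^* = (1-z_1\bar w_1)\,P_2(z,w) + (1-z_2\bar w_2)\,P_1(z,w), \]
where $P_1 = p\bar p\,\Kmax_1$ and $P_2 = p\bar p\,\Kmax_2$ are the polynomial numerators of the canonical kernels. The left-hand side has $z$-degree at most $(m,n)$, so comparing degrees (and using that the canonical construction of \cite{bk13} produces kernels of controlled degree) forces $P_1$ to have $z$-degree at most $(m,n-1)$ and $P_2$ to have $z$-degree at most $(m-1,n)$, the drop by one coming from the factors $(1-z_2\bar w_2)$ and $(1-z_1\bar w_1)$. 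A general element of $\mathcal{H}(\Kmax_1)$ is a limit of combinations $\sum_i \Kmax_1(\cdot,w_i)v_i = \frac{1}{p(\cdot)}\sum_i \frac{P_1(\cdot,w_i)v_i}{\overline{p(w_i)}}$, hence has the form $q/p$ with $\deg q \le (m,n-1)$; running the identical argument on the symmetric canonical decomposition (which produces $(\Kmin_1,\Kmax_2)$) handles $\mathcal{H}(\Kmin_1)$ and the $(m-1,n)$ bound for $\mathcal{H}(\Kmax_2)$, $\mathcal{H}(\Kmin_2)$. This settles the degree statements.

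For the dimensions I would first pass from $\mathcal{H}(\Kmax_1)$ to the full invariant subspace. Expanding $\frac{1}{1-z_1\bar w_1} = \sum_{k\ge 0} z_1^k\bar w_1^k$, the reproducing kernel of $\Smax_1$ factors as
\[ \frac{\Kmax_1(z,w)}{1-z_1\bar w_1} = \sum_{k\ge 0} z_1^k\bar w_1^k\,\Kmax_1(z,w), \]
which exhibits $\Smax_1 = \bigoplus_{k\ge 0} z_1^k\,\mathcal{H}(\Kmax_1)$ as an orthogonal direct sum. Equivalently, $M_{z_1}$ acts on $\Smax_1$ as a pure shift with wandering subspace $\Smax_1 \ominus z_1\Smax_1 = \mathcal{H}(\Kmax_1)$, so $\dim\mathcal{H}(\Kmax_1)$ equals the multiplicity of this isometry; the same holds for $\Smin_1$ and $\mathcal{H}(\Kmin_1)$, and symmetrically in the variable $z_2$.

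The crux---and the step I expect to be hardest---is computing these multiplicities in the matrix setting. My approach would be to evaluate along a slice: the multiplicity of the pure shift $M_{z_1}$ on $\Smax_1$ equals the generic fiber dimension $\dim\{f(\lam,\cdot) : f \in \Smax_1\}$, a space of $\mathbb{C}^d$-valued functions of $z_2$, since the Beurling--Lax representation $\Smax_1 = \Xi\,H^2(\cdot;\mathcal{F})$ in the $z_1$ variable has $\dim\mathcal{F} = \dim \Xi(\lam)\mathcal{F}$ for generic $\lam$. Using $f = q/p$ with $\deg_2 q \le n-1$ from the first step, this fiber consists of rational functions in $z_2$ with denominator $p(\lam,\cdot)$, and its dimension is governed by the one-variable restriction $z_2 \mapsto \Theta(\lam, z_2)$. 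A winding-number/Fredholm count in the single variable $z_2$ should then identify this dimension with the number of zeros of $\det\Theta(\lam,\cdot)$ in $\D$, namely $\deg_2 \det\Theta$; the delicate points are justifying that a generic $\lam$ computes the multiplicity and carefully tracking the cancellation of $p^d$ in $\det\Theta = \det Q / p^d$ so that the count is correct after reduction to lowest terms. Because both $\Smax_1$ and $\Smin_1$ return the same value $\deg_2\det\Theta$ under this count, the equality $\dim\mathcal{H}(\Kmax_1) = \dim\mathcal{H}(\Kmin_1)$ drops out, and interchanging the indices $1$ and $2$ gives the $z_2$-statements.
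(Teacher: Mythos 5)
First, a point of reference: the paper does not prove Theorem \ref{thm:rational2} at all --- it is imported verbatim from \cite{bk13} (the degree bounds from Theorem 1.7 there, the dimension counts from Theorem 1.8), so there is no in-paper argument to compare yours against; you are effectively offering a new proof of a cited result. Your middle step is fine: the factorization $\Kmax_1(z,w)/(1-z_1\bar w_1)=\sum_k z_1^k\bar w_1^k \Kmax_1(z,w)$ does exhibit $\mathcal{H}(\Kmax_1)$ as the wandering subspace of $M_{z_1}$ on $\Smax_1$. But the two load-bearing steps are asserted rather than proved. For the degree bounds, ``comparing degrees'' in the cleared-denominator identity does not force $\deg P_1\le(m,n-1)$ and $\deg P_2\le(m-1,n)$: Agler decompositions are highly non-unique (one can shift $(1-z_1\bar w_1)R$ from one summand to the other), so cancellation between the two terms is entirely possible, and the parenthetical appeal to ``the canonical construction produces kernels of controlled degree'' is exactly the statement to be proved --- the argument is circular as written. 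The genuine proof requires the positivity of the kernels (or the structural characterization of $\Smax_1$, $\Smin_2$), not degree bookkeeping. A small but real slip in the same step: $(\Kmax_1,\Kmax_2)$ is not an Agler pair; the canonical pairs are $(\Kmax_1,\Kmin_2)$ and $(\Kmin_1,\Kmax_2)$, so your $P_2$ should be $p\bar p\,\Kmin_2$.

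The dimension count is the crux, as you say, and there the proposal is a heuristic rather than an argument. Three things are missing. (i) The identification of the shift multiplicity with the generic fiber dimension $\dim\{f(\lam,\cdot):f\in\Smax_1\}$ needs the Beurling--Lax symbol $\Xi(\lam)$ to be injective for generic $\lam\in\D$; this is salvageable once finite-dimensionality of the wandering subspace is known, but that in turn rests on the (gapped) degree step. (ii) The claim that a winding-number/Fredholm count identifies this fiber dimension with the number of zeros of $\det\Theta(\lam,\cdot)$ in $\D$ is precisely the hard content of the theorem and no mechanism is given connecting the fiber of the two-variable subspace $\Smax_1$ to a one-variable model space for the slice $\Theta(\lam,\cdot)$. (iii) Even granting (ii) for $\Smax_1$, the equality $\dim\mathcal{H}(\Kmin_1)=\dim\mathcal{H}(\Kmax_1)$ does not ``drop out'': $\Smin_1\subseteq\Smax_1$ are genuinely different $z_1$-invariant subspaces, so one must separately show their generic fibers have the same dimension rather than assume the same count applies. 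In short, the architecture (degree bounds $\Rightarrow$ wandering subspace $\Rightarrow$ multiplicity count) is reasonable, but both endpoints of that chain are exactly where the work of \cite{bk13} lives, and neither is supplied here.
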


We use this result to obtain the following generalization of one direction of Theorem \ref{t-rank1}: 
 
 \begin{theorem} \label{thm:gen1} If $\Theta$ is a $d\times d$ matrix-valued rational inner function with $\deg \Theta \le (1,n),$ then
 \[
 \Rank [S_{z_1}, S_{z_1}^*] \le dn.
 \]
 \end{theorem}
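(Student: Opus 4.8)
The plan is to analyze the self-commutator through the orthogonal decomposition $\mathcal{K}_{\Theta} = \Smax_1 \oplus \Smin_2$ and to reduce the rank computation to a dimension count of rational functions of controlled bidegree, which is exactly what Theorem \ref{thm:rational2} supplies.

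First I would record that, since $\mathcal{K}_{\Theta}$ is invariant under the backward shift, $S_{z_1}^* = T_{z_1}^*|_{\mathcal{K}_{\Theta}}$, while $S_{z_1} = P_{\Theta} T_{z_1}|_{\mathcal{K}_{\Theta}}$. Because $\Smax_1$ is the maximal $M_{z_1}$-invariant subspace, $S_{z_1}$ is block upper-triangular with respect to $\mathcal{K}_{\Theta} = \Smax_1 \oplus \Smin_2$, and its $(1,1)$-corner $V := S_{z_1}|_{\Smax_1}$ is just multiplication by $z_1$ (for $f \in \Smax_1$ we have $z_1 f \in \Smax_1 \subseteq \mathcal{K}_{\Theta}$, so no projection occurs), hence an isometry. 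Writing
\[ S_{z_1} = \begin{pmatrix} V & B \\ 0 & C \end{pmatrix}, \]
a direct computation using $V^*V = I$ gives
\[ [S_{z_1}, S_{z_1}^*] = \begin{pmatrix} BB^* - (I - VV^*) & BC^* - V^*B \\ CB^* - B^*V & CC^* - C^*C - B^*B \end{pmatrix}. \]
The advantage of this form is that the potentially infinite-rank pieces of the two defect operators $I - S_{z_1}^* S_{z_1}$ and $I - S_{z_1} S_{z_1}^*$ have already cancelled: the identity part of the $\Smax_1$-diagonal drops out because $V$ is isometric, and the individually large operators $C^*C,\,CC^*$ on the infinite-dimensional space $\Smin_2$ appear only in the difference $CC^* - C^*C - B^*B$.

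Next I would bound the rank of each surviving block. The principal term is $I - VV^*$, the projection onto the wandering subspace $\Smax_1 \ominus z_1 \Smax_1$; since $\Smax_1 = \mathcal{H}\big(\Kmax_1(z,w)/(1 - z_1 \bar{w}_1)\big)$, this wandering subspace is unitarily identified with $\mathcal{H}(\Kmax_1)$, so $\Rank(I - VV^*) = \dim \mathcal{H}(\Kmax_1)$. By Theorem \ref{thm:rational2} every element of $\mathcal{H}(\Kmax_1)$ is of the form $q/p$ with $\deg q \le (1, n-1)$, and the remaining blocks are built from $P_{\Theta} T_{z_1}$ applied to functions in $\mathcal{K}_{\Theta}$, whose images I would likewise show are represented by numerators of $z_2$-degree at most $n-1$ over $p$. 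The target bound $dn$ then comes from a deliberately crude count: the relevant numerators are $\mathbb{C}^d$-valued with $z_2$-degree at most $n-1$, giving at most $d \cdot n$ linearly independent functions. This is exactly where sharpness is lost, since the true dimension $\deg_2 \det \Theta$ only emerges after exploiting determinantal cancellation across the $d$ components, whereas counting each component separately yields merely $dn$.

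The main obstacle is the control of the off-diagonal and lower-right blocks coming from $\Smin_2$: one must show that $B$ and $CC^* - C^*C - B^*B$ have ranges lying inside the same finite-dimensional space of low-$z_2$-degree rational functions, which requires understanding how $S_{z_1}$ moves $\Smin_2$. Here the hypothesis $\deg_1 \Theta \le 1$ is essential, as it forces multiplication by $z_1$ to push functions out of $\mathcal{K}_{\Theta}$ by only a single $z_1$-degree and thereby keeps the induced defect data $z_1$-independent. Non-commutativity of $\Theta$ and $\Theta^*$, together with the matrix denominator $p$, makes the clean scalar Fourier/degree bookkeeping unavailable, so one is forced into the per-component estimate; overcoming precisely this loss is what separates Theorem \ref{thm:gen1} from the sharp statement recorded in Conjecture \ref{conjecture}.
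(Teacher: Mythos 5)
Your setup — the decomposition $\mathcal{K}_{\Theta} = \Smax_1 \oplus \Smin_2$, the block upper-triangular form of $S_{z_1}$ with isometric corner $V$, and the identification of $\Rank(I-VV^*)$ with $\dim \mathcal{H}(\Kmax_1) \le dn$ via Theorem \ref{thm:rational2} — is consistent with the paper's strategy, and the block algebra is correct. But the final counting step has a genuine gap. You assert that the ranges of $I-VV^*$, $B$, and the remaining blocks all lie "inside the same finite-dimensional space" of functions whose numerators are $\mathbb{C}^d$-valued of $z_2$-degree at most $n-1$, and you count that space as $dn$-dimensional. That count is only valid if the numerators are also \emph{independent of $z_1$}. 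They are not, for the very first block you treat: by Theorem \ref{thm:rational2}, elements of $\mathcal{H}(\Kmax_1)$ have numerators of degree up to $(1,n-1)$, so the ambient space you are actually working in has dimension $2dn$, and your "deliberately crude count" delivers $2dn$, not $dn$. Worse, nothing in the sketch shows that the ranges of the four blocks sit in a \emph{common} subspace at all; bounding each block separately and adding only degrades the estimate further.

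What closes this gap in the paper is an explicit computation of the commutator on the reproducing kernel elements $K^1_w e$ and $K^2_w e$ of $\Smax_1$ and $\Smin_2$: after the cancellations you correctly anticipate (the isometry $V$ killing the identity on $\Smax_1$, etc.), the surviving terms are $P_{\Theta}$ applied to linear combinations of $f_i(0,z_2)/p(0,z_2)$ and $T_{\bar{z}_1}f_i(z)/p(0,z_2)$ — both of which have $z_1$-degree zero precisely because $\deg f_i \le (1,n-1)$, and both of which sit over the single denominator $p(0,z_2)$. It is this $z_1$-independence of the surviving numerators, not merely their $z_2$-degree, that places the entire image of the commutator inside one $dn$-dimensional space. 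Your proposal defers exactly this computation ("I would likewise show...") while naming a containing space that does not in fact contain $\mathcal{H}(\Kmax_1)$, so as written the argument proves at best $\Rank[S_{z_1},S_{z_1}^*] \le 2dn$.
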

 
 The proof is similar to that of the corresponding direction in Theorem \ref{t-rank1}. For the convenience of the reader, we include some details.
 
 \begin{proof} Let $\Theta$ be a $d\times d$ matrix-valued rational inner function with $\deg \Theta \le (1,n)$ and let $N =  \deg_2 \det\Theta$ and $M = \deg_1 \det \Theta$. Notice that $N\le dn$ and $M\le d$. Theorem \ref{thm:rational2} with $m \le 1$ informs us that we can find vector-valued functions $f_i,$ $i=1, \dots, N$ with $\deg f_i \le (1,n-1)$ and $g_j$, $j=1, \dots, M$ with $\deg g_j \le (0,n)$ such that 
\[ 
\Kmax_1(z,w) = \sum_{i=1}^N \frac{f_i(z) f_i(w)^*}{p(z)\overline{p(w)}}  \ \text{ and } \  \Kmin_2(z,w) =   \sum_{j=1}^M\frac{ g_j(z) g_j(w)^*}{p(z)\overline{p(w)}}.\]
 Without loss of generality, we assume orthogonality and normality (or trivial norms) of $\left\{ \frac{f_i}{p}\right\}$, and likewise for $\left\{\frac{g_j}{p}\right\}$. Then, since $\mathcal{K}_{\Theta} = \Smax_1 \oplus \Smin_2$, we can write the reproducing kernel of $\mathcal{K}_{\Theta}$ as the sum of the reproducing kernels $K^1_w(z)$ and $K^2_w(z)$ of the spaces $\Smax_1$ and $\Smin_2$ as follows
 \[ \frac{I - \Theta(z) \Theta(w)^*}{(1-z_1\bar{w}_1)(1-z_2\bar{w}_2)} = K_w^1(z) + K_w^2(z) =
   \frac{ \sum_{i=1}^N f_i(z) f_i(w)^*}{p(z)\overline{p(w)}(1-z_1 \bar{w}_1)} +  \frac{ \sum_{j=1}^Mg_j(z) g_j(w)^*}{p(z)\overline{p(w)}(1-z_2 \bar{w}_2)}. \]
Now, fix $e \in \mathbb{C}^d$ and $w\in \mathbb{D}^2$. Using the structures of $K_w^1(z)e$ and $K_w^2(z)e$, we establish formulas for $[S_{z_1}^*, S_{z_1}]K_w^je.$ As the proofs are quite technical and follow the scalar arguments from \cite{BLPreprint} closely, we omit the details. Here are the obtained formulas
\[ 
  \left[S^*_{z_1}, S_{z_1}\right] K^1_w e  =    P_{\Theta} \left(  \sum_{i=1}^{N}\frac{f_i(0,z_2)}{p(0,z_2)} \left( \frac{ f_i(w)}{p(w)} \right)^*e \right)\]
and similarly
  \[  \left[ S^*_{z_1}, S_{z_1} \right] K^2_w e = P_{\Theta}  \left(
 \sum_{i=1}^N \frac{ T_{\bar{z}_1}f_i(z)}{p(0,z_2)} \left( T_{\bar{z}_1} \tfrac{f_i}{p}(w) \right)^* e \right),  \] 
where $P_{\Theta}$ denotes the projection onto $\mathcal{K}_{\Theta}.$  Combining these two formulas shows that
\begin{align*}
 &[S^*_{z_1}, S_{z_1}] \frac{ I- \Theta(z) \Theta(w)^*}{(1-z_1 \bar{w}_1)(1-z_2\bar{w}_2)} \,e =
 \\
 &
 P_{\Theta}  \left(  \frac{1}{p(0,z_2)}\sum_{i=1}^N T_{\bar{z}_1}f_i(z)  \left(T_{\bar{z}_1} \tfrac{f_i}{p}(w) \right)^* e + f_i(0,z_2)\left( \tfrac{ f_i(w)}{p(w)} \right)^* e \right).
 \end{align*}
 Since $\deg f_i \le (1, n-1)$,  then $\deg T_{\bar{z}_1} f_i \le (0, n-1)$ and  $\deg f_i(0, z_2) \le (0, n-1)$. Thus, considering all $w\in \mathbb{D}^2$ and $e \in \mathbb{C}^d$, the set of vector-valued functions of the form 
 \[  \frac{1}{p(0,z_2)}\sum_{i=1}^N T_{\bar{z}_1}f_i(z)  \left(T_{\bar{z}_1} \tfrac{f_i}{p}(w) \right)^* e +f_i(0,z_2)\left( \tfrac{ f_i(w)}{p(w)} \right)^* e\]
 can have at most dimension $nd.$ By the definition of $\mathcal{K}_{\Theta}$,  linear combinations of functions of the form  \[ \frac{ I- \Theta(z) \Theta(w)^*}{(1-z_1 \bar{w}_1)(1-z_2\bar{w}_2)} e \]
 are dense in $\mathcal{K}_{\Theta}$. Thus, we can immediately conclude that 
 \[ \Rank [ S_{z_1}, S_{z_1}^*] \le n d,\]  
as desired. \end{proof}
 
 We can similarly study the other direction of Theorem \ref{t-rank1} in the matrix setting. 
The following result provides a partial generalization.
  
 \begin{theorem} \label{thm:gen2} Assume that  $\Theta$ is a $d\times d$ matrix-valued inner function with $\Rank [S_{z_1}, S^*_{z_1}] = n$. Then 
 \[  \dim \mathcal{H}(\Kmax_1) \le n.\]
 \end{theorem}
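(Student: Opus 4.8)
The plan is to realize $\mathcal{H}(\Kmax_1)$ concretely as a wandering subspace inside $\mathcal{K}_{\Theta}$ and then bound its dimension by a rank count against the commutator. First I would note that $\Smax_1$ is invariant under multiplication by $z_1$ and that this multiplication is isometric, so $S_{z_1}$ restricts to an isometry on $\Smax_1$ with wandering subspace $W := \Smax_1 \ominus z_1\Smax_1$. Because $\Smax_1 = \mathcal{H}\!\left(\frac{\Kmax_1(z,w)}{1-z_1\bar{w}_1}\right)$ and the reproducing kernel of $z_1\Smax_1$ is $z_1\bar{w}_1$ times that of $\Smax_1$, the wandering subspace $W$ has reproducing kernel $(1-z_1\bar{w}_1)\frac{\Kmax_1(z,w)}{1-z_1\bar{w}_1} = \Kmax_1(z,w)$. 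Hence $W = \mathcal{H}(\Kmax_1)$, and it suffices to prove $\dim W \le n$.

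Next I would record two facts about how $S_{z_1}^*$ acts on $W$. For $f\in W$ we have $f\perp z_1\Smax_1$, which via $\langle f, z_1 h\rangle = \langle T_{\bar{z}_1}f, h\rangle$ is equivalent to $T_{\bar{z}_1}f \perp \Smax_1$; since $S_{z_1}^* f = P_{\Theta}T_{\bar{z}_1}f$ and $\mathcal{K}_{\Theta} = \Smax_1\oplus\Smin_2$, this forces $S_{z_1}^* f \in \Smin_2$. Also, $f\in\Smax_1$ gives $\|S_{z_1}f\| = \|f\|$.

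Then comes the rank argument. Set $C = [S_{z_1}^*, S_{z_1}]$, which is self-adjoint and has the same rank $n$ as $[S_{z_1}, S_{z_1}^*]$, so $\ker C$ has codimension $n$. If $\dim W \ge n+1$, then $\dim(W\cap\ker C)\ge \dim W - n \ge 1$, so there is a nonzero $f\in W$ with $Cf = 0$. Pairing against $f$ yields $0 = \langle Cf, f\rangle = \|S_{z_1}f\|^2 - \|S_{z_1}^* f\|^2$, hence $\|S_{z_1}^* f\| = \|S_{z_1}f\| = \|f\|$. Writing $S_{z_1}^* f = P_{\Theta}T_{\bar{z}_1}f$ and using that $T_{\bar{z}_1}$ is a contraction whose defect is exactly the $z_1$-constant part of $f$, the chain $\|f\| = \|S_{z_1}^* f\| \le \|T_{\bar{z}_1}f\| \le \|f\|$ must be all equalities; this forces simultaneously that $f$ has no $z_1$-constant term, i.e. $f = z_1 u$ with $u = T_{\bar{z}_1}f$, and that $T_{\bar{z}_1}f\in\mathcal{K}_{\Theta}$, so that $u\in\mathcal{K}_{\Theta}$ and $u = S_{z_1}^* f\in\Smin_2$.

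Finally I would invoke maximality to reach a contradiction. Since $f = z_1 u\in\Smax_1$, we have $z_1^k u\in\mathcal{K}_{\Theta}$ for every $k\ge 1$, while $u\in\mathcal{K}_{\Theta}$ handles $k=0$; thus $z_1^k u\in\mathcal{K}_{\Theta}$ for all $k$, so $u\in\Smax_1$ by maximality. But $u\in\Smin_2 = \mathcal{K}_{\Theta}\ominus\Smax_1$, whence $u=0$ and $f = z_1 u = 0$, a contradiction. Therefore $\dim W \le n$, proving $\dim\mathcal{H}(\Kmax_1)\le n$. The step I expect to be most delicate is the norm bookkeeping in the third paragraph: one must extract from the single scalar identity $\|S_{z_1}^* f\| = \|f\|$ the two separate structural conclusions $f\in z_1 H^2_d(\D^2)$ and $T_{\bar{z}_1}f\in\mathcal{K}_{\Theta}$ that drive the maximality argument, and it is precisely this passage through the contraction $T_{\bar{z}_1}$ and the projection $P_{\Theta}$, rather than any scalar-specific identity, that lets the argument run uniformly in the matrix setting.
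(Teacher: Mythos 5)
Your proof is correct and follows essentially the same route as the paper: both arguments locate a nonzero $f \in \mathcal{H}(\Kmax_1) \cap \ker[S_{z_1},S_{z_1}^*]$ by a dimension count, deduce $f(0,z_2)\equiv 0$ so that $f = z_1 T_{\bar z_1}f$, and then use the maximality of $\Smax_1$ to force $f=0$. The only difference is cosmetic: the paper computes $[S_{z_1},S_{z_1}^*]f = -P_{\Theta}\left(f(0,z_2)\right)$ explicitly on $\Smax_1$ and kills $f(0,z_2)$ via orthogonality to $\Theta H^2_d(\D^2)$, whereas you extract the same conclusion from the quadratic form $\langle [S^*_{z_1},S_{z_1}]f,f\rangle = 0$ together with a chain of norm equalities through the contraction $T_{\bar z_1}$.
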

 
 \begin{proof} First, observe that if $f \in \mathcal{S}^{max}_1$, then $z_1 f \in \mathcal{K}_{\Theta}$ and so 
 \[ 
 \left(S_{z_1}S_{z_1}^* - S_{z_1}^* S_{z_1} \right) f = P_{\Theta} \left( z_1 T_{\bar{z}_1} f - f  \right) = -P_{\Theta} \left( f(0,z_2) \right).
 \]
 Now, assume that $\Rank [S_{z_1}, S^*_{z_1}] = n$ and by way of contradiction, assume $\dim \mathcal{H}(\Kmax_1) > n.$ Then there is some nontrivial $f \in \mathcal{H}(\Kmax_1)$ such that $f \in \ker 
  [ S_{z_1}, S_{z_1}^*].$ This means
  \[ P_{\Theta} f(0,z_2) = 0\]
  and so, there is some vector-valued $h \in H_d^2(\mathbb{D}^2)$ such that  $f(0,z_2) = \Theta(z) h(z)$. But then, using basic orthogonality relations, 
  \[ \| f (0,z_2) \|^2_{H^2} = \left \langle f, f(0,z_2) \right \rangle_{H^2} = \left \langle f, \Theta h \right \rangle_{H^2} =0.\]
  Thus, $f(0,z_2) \equiv 0$ and $f(z) = z_1 T_{\bar{z}_1}f(z).$ As $f \in \Smax_1$, this implies $z^k_1 T_{\bar{z}_1}f(z) \in \mathcal{K}_{\Theta}$ for all $k \in \mathbb{N}.$ Thus, we can conclude that $T_{\bar{z}_1}f \in \Smax_1$ and so, $f \in z_1 \Smax_1.$  As $\mathcal{H}(\Kmax_1) = \Smax_1 \ominus z_1 \Smax_1$, we conclude that $f \perp f$ and so $f \equiv 0$, a contradiction.
\end{proof}

\begin{remark} \label{rem:con}
\textnormal{
Theorems \ref{thm:gen1} and \ref{thm:gen2} are unsatisfactory for two reasons. First, in the scalar setting, Theorem \ref{t-rank1} shows that if $ [S_{z_1}, S^*_{z_1}]$ is finite rank, then $\theta$ is a rational function. An important part of that result involves the fact that $\Rank [S_{z_1}, S^*_{z_1}] < \infty$ implies $\deg_1 \theta \le 1.$ Unfortunately, the proof of that result relies on scalar arguments that do not generalize to the matrix setting. Nevertheless, we still conjecture that  $\Rank [S_{z_1}, S^*_{z_1}] < \infty$ implies $\deg_1 \Theta \le 1$ and will discuss this further in the next section.}

\textnormal{
Now assume $\Theta$ is a $d\times d$ rational inner function with $\deg_1 \Theta \le 1.$ In the scalar setting, Theorem \ref{t-rank1} shows that if $\deg_1 \theta\le 1$, then $\Rank [S_{z_1}, S^*_{z_1}] =n$ if and only if $\deg_2 \theta =n.$ Let us consider the matrix analogue of this result encoded in Theorems \ref{thm:gen1} and \ref{thm:gen2}. It says
\[
\begin{aligned}
& \text{ If }  \deg_2 \Theta = n, \text{ then } \Rank  [S_{z_1}, S^*_{z_1}]  \le dn. \\
& \text{ If } \Rank  [S_{z_1}, S^*_{z_1}]  =N,  \text{ then } \deg_2 \det \Theta \le N.
\end{aligned}
\]
For $\Theta$ with $\deg_2 \det \Theta = d \cdot \deg_2 \Theta$, then these results combine to give:
\begin{equation} \label{eqn:con1} \Rank  [S_{z_1}, S^*_{z_1}]  =N  \ \text{ if and only if } \ \deg_2 \det \Theta=N.\end{equation} 
We do not currently have this if and only if condition for all $\Theta$ because in general,
\[ \deg_2 \det \Theta \le d \cdot \deg_2 \Theta,\]
with strict inequality possible. However, we conjecture that \eqref{eqn:con1} is actually true for all $\Theta.$} 
 \end{remark} 

The conjectures discussed in Remark \ref{rem:con} combined with our known results to yield:
 
 \begin{conjecture} \label{conjecture} Let $\Theta$ be a $d \times d$ matrix-valued inner function on the bidisk. Then $\Rank [S_{z_1}, S_{z_1}^*] = n$ if and only if $\deg_1 \Theta \le 1$ and $\deg_2 \det \Theta = n.$  \end{conjecture}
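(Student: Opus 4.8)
The plan is to split the biconditional into a sufficiency direction ($\deg_1 \Theta \le 1$ together with $\deg_2 \det\Theta = n$ imply $\Rank[S_{z_1},S_{z_1}^*] = n$) and a necessity direction ($\Rank[S_{z_1},S_{z_1}^*] = n$ implies both conditions), and to first record that one inequality is already in hand. Indeed, combining the proof of Theorem \ref{thm:gen2} with Theorem \ref{thm:rational2}, for any rational inner $\Theta$ one has
\[ \Rank [S_{z_1}, S_{z_1}^*] \ge \dim \mathcal{H}(\Kmax_1) = \deg_2 \det \Theta, \]
because the restriction of the self-commutator to $\Smax_1$ acts by $f \mapsto -P_{\Theta} f(0,z_2)$ with kernel exactly $z_1 \Smax_1$, so its range has dimension $\dim(\Smax_1 \ominus z_1 \Smax_1) = \dim \mathcal{H}(\Kmax_1)$, and this range is contained in the range of the full operator. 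Thus the conjecture reduces to two statements: (A) when $\deg_1 \Theta \le 1$, the upper bound of Theorem \ref{thm:gen1} can be sharpened from $d\cdot \deg_2 \Theta$ to $\deg_2 \det \Theta$; and (B) finite rank of $[S_{z_1}, S_{z_1}^*]$ forces $\Theta$ to be rational inner with $\deg_1 \Theta \le 1$.

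For (A), which yields the full sufficiency direction together with the conclusion $\deg_2 \det\Theta = n$ in the necessity direction, I would work directly with the range description from the proof of Theorem \ref{thm:gen1}. Since $\deg_1 f_i \le 1$, I would write $f_i = f_i^{(0)}(z_2) + z_1 f_i^{(1)}(z_2)$, so that $f_i(0,z_2) = f_i^{(0)}$ and $T_{\bar z_1} f_i = f_i^{(1)}$, and the range of $[S_{z_1}, S_{z_1}^*]$ is spanned by the $P_{\Theta}$-images of $f_i^{(0)}/p(0,z_2)$ and $f_i^{(1)}/p(0,z_2)$ for $i = 1, \dots, \deg_2 \det\Theta$. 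A priori this is a spanning set of size $2\deg_2 \det\Theta$ whose members are $P_{\Theta}$-images of quotients $h(z_2)/p(0,z_2)$ with $\deg h \le \deg_2 \Theta - 1$, a family parametrized by a space of dimension $d\cdot \deg_2 \Theta$. The plan is to show that after applying $P_{\Theta}$ these functions collapse onto a space of dimension exactly $\deg_2 \det\Theta = \dim\mathcal{H}(\Kmax_1)$; the natural mechanism is the boundary identity $Q(\tau)Q(\tau)^* = |p(\tau)|^2 I$ on $\mathbb{T}^2$, which should identify the $P_{\Theta}$-image of these boundary quotients with $\mathcal{H}(\Kmax_1)$ itself. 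Matching the resulting upper bound against the lower bound above gives $\Rank[S_{z_1},S_{z_1}^*] = \deg_2 \det\Theta$.

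For (B), I would first extract rationality and finiteness: Theorem \ref{thm:gen2} already gives $\dim\mathcal{H}(\Kmax_1) \le n < \infty$, and I would try to promote finiteness of $\dim\mathcal{H}(\Kmax_1)$ to rationality of $\det\Theta$, hence of $\Theta$, via the finite-dimensional reproducing-kernel characterizations underlying Theorem \ref{thm:rational2}, as in the scalar case. The genuinely hard step is the bound $\deg_1 \Theta \le 1$. Controlling $\deg_1 \det\Theta$ does not suffice here, since $\deg_1 \det\Theta \le d\cdot \deg_1\Theta$ with strict inequality possible, so the argument must detect the matrix $z_1$-degree directly. The strategy I would attempt is the contrapositive: assuming $\deg_1 \Theta \ge 2$, construct infinitely many linearly independent elements in the range of $[S_{z_1}, S_{z_1}^*]$ by iterating the operator against kernel functions, mirroring the scalar argument behind Theorem \ref{t-rank1}.

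The main obstacle is precisely this last step. As noted in Remark \ref{rem:con}, the scalar proof that finite rank forces $\deg_1 \theta \le 1$ relies on explicit scalar rational-inner formulas whose direct transcription is blocked by non-commutativity of the matrix entries; producing the required infinite family of range vectors, or an equivalent $z_1$-degree obstruction, is where new input would be needed, plausibly the canonical Agler-kernel and transfer-function machinery of \cite{bk13}. I expect the sharpening (A) to be tractable with careful bookkeeping, while (B)'s degree bound is the decisive difficulty.
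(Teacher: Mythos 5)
The statement you are addressing is Conjecture \ref{conjecture}: the paper offers no proof of it, only the partial results of Theorems \ref{thm:gen1} and \ref{thm:gen2}, the discussion in Remark \ref{rem:con}, and the supporting examples of Section \ref{sec:ex}. Your proposal is therefore a roadmap rather than a proof, and judged as a proof it has genuine gaps at exactly the two places you flag. Your preliminary lower bound is sound: the proof of Theorem \ref{thm:gen2} does show that $[S_{z_1},S_{z_1}^*]$ restricted to $\Smax_1$ acts by $f\mapsto -P_\Theta f(0,z_2)$ with kernel $z_1\Smax_1$, hence is injective on $\mathcal{H}(\Kmax_1)$, giving $\Rank[S_{z_1},S_{z_1}^*]\ge\dim\mathcal{H}(\Kmax_1)=\deg_2\det\Theta$ for rational $\Theta$. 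But step (A) is only a hope: you assert that applying $P_\Theta$ to the $2\deg_2\det\Theta$ spanning vectors $f_i^{(0)}/p(0,z_2)$ and $f_i^{(1)}/p(0,z_2)$ ``should'' collapse the span to dimension exactly $\deg_2\det\Theta$ via the boundary identity $QQ^*=|p|^2I$, but no mechanism is supplied for why these projections span a space of that dimension; the paper itself obtains only the weaker bound $d\cdot\deg_2\Theta$ in Theorem \ref{thm:gen1} and records in Remark \ref{rem:con} that the sharp equivalence \eqref{eqn:con1} is known only when $\deg_2\det\Theta=d\cdot\deg_2\Theta$.

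Step (B) has two holes. First, Theorem \ref{thm:gen2} gives $\dim\mathcal{H}(\Kmax_1)\le n$, but ``promote finiteness of $\dim\mathcal{H}(\Kmax_1)$ to rationality of $\det\Theta$, hence of $\Theta$'' is unjustified: finite-dimensionality of a single Agler-kernel space does not by itself yield rationality (the scalar argument needs control in both variables), and even rationality of $\det\Theta$ would not obviously transfer to rationality of the entries of $\Theta$. Second, and decisively, the implication $\Rank[S_{z_1},S_{z_1}^*]<\infty\Rightarrow\deg_1\Theta\le 1$ is precisely the step the authors state does not survive the passage to matrices (Remark \ref{rem:con}); your plan to construct infinitely many linearly independent range vectors when $\deg_1\Theta\ge 2$ is the right target --- the last example of Section \ref{sec:ex} executes it for one concrete $2\times 2$ function --- but no general construction is proposed. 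So the proposal correctly locates the difficulties but does not close the conjecture, and neither does the paper.
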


\section{Some Examples} \label{sec:ex}

In this section, we consider several examples supporting Conjecture \ref{conjecture}. We demonstrate that this conjecture is true for $d\times d$ diagonal matrix inner functions.  We then investigate several  $2 \times 2$ \emph{non-diagonal} inner functions and show that the conjecture holds for them as well.

\begin{example} 
\textnormal{Consider the $d\times d$ diagonal matrix function
\[ \Theta(z) = \begin{bmatrix} 
\theta_1(z) & &  \\
& \ddots &  \\
& & \theta_d(z) 
 \end{bmatrix}, 
 \]
where each $\theta_i(z)$ is a scalar two-variable inner function. Then, $\mathcal{K}_{\Theta}$ is the direct sum of the $\mathcal{K}_{\theta_i}$ spaces and so,
\[ \Rank [ S_{z_1}, S_{z_1}^*] \text{ on } \mathcal{K}_{\Theta} = \sum_{i=1}^d \left( \Rank [ S_{z_1}, S_{z_1}^*] \text{ on } \mathcal{K}_{\theta_i} \right).\]
Thus, by Theorem \ref{t-rank1}, $\Rank [ S_{z_1}, S_{z_1}^*] =n$ on $\mathcal{K}_{\Theta}$ if and only if each $\theta_i$ is rational inner with 
\[ \deg_1 \theta_i \le 1 \ \text{ and } \sum_{i=1}^d \deg_2 \theta_i = n.\]
Furthermore, using basic facts about rational inner functions, one can show that 
\[ \deg_2 \det \Theta = \sum_{i=1}^d \deg_2 \theta_i. \]
It follows that $\Rank [ S_{z_1}, S_{z_1}^*] =n$ on $\mathcal{K}_{\Theta}$ if and only if $\deg_1 \Theta = \max_i \deg_1 \theta_i \le 1$ and $\deg_2 \det \Theta = n.$ Thus, Conjecture \ref{conjecture} holds for diagonal matrix-valued inner functions.}

\textnormal{
This further implies that Theorem \ref{thm:gen1} is not sharp. Specifically, consider
\[ \Theta(z) =
\left[
\begin{array}{cc}
z_1z_2 & 0 \\
0 & 1 
\end{array} \right]. \]
Theorem \ref{thm:gen1} implies that $\Rank [ S_{z_1}, S_{z_1}^*] \le 2 \cdot 1 = 2.$ However, as $\deg \det_2 \Theta =1$, our earlier arguments show that $\Rank [ S_{z_1}, S_{z_1}^*] = 1.$}
\end{example}

We investigate two examples of inner functions $\Theta$ that are not diagonal.

\begin{example} \textnormal{Consider the matrix-valued function 
\[ \Theta(z) = \frac{1}{2} \begin{bmatrix} 
z_1 +z_2 & z_1-z_2 \\
z_1-z_2 & z_1+z_2 
\end{bmatrix}.\]
A simple computation shows that $\Theta$ is unitary-valued on $\mathbb{T}^2$ and hence, is inner. Observe that we can decompose the reproducing kernel of $\mathcal{K}_{\Theta}$ as follows
\[ 
\frac{I - \Theta(z) \Theta(w)^*}{(1-z_1\bar{w}_1)(1-z_2\bar{w}_2)} = \frac{\tfrac{1}{2}} {1-z_1\bar{w}_1} \begin{bmatrix} -1 \\ 1 \end{bmatrix} \begin{bmatrix} -1 & 1 \end{bmatrix} + \frac{ \tfrac{1}{2}}{1-z_2 \bar{w}_2}  \begin{bmatrix} 1 \\ 1 \end{bmatrix} \begin{bmatrix} 1 & 1 \end{bmatrix}. 
\]
This reproducing kernel decomposition induces the following orthogonal decomposition
\[ \mathcal{K}_{\Theta} =  \begin{bmatrix} -1 \\ 1 \end{bmatrix} H^2_1(\mathbb{D}) \oplus
 \begin{bmatrix} 1 \\ 1 \end{bmatrix} H^2_2(\mathbb{D}) = \mathcal{H}_1 \oplus \mathcal{H}_2,\] 
 where $H^2_j(\mathbb{D})$ denotes the one variable Hardy space with independent variable $z_j.$
We turn our attention to $ [ S_{z_1}, S_{z_1}^*].$ We first show that this operator is identically zero on $\mathcal{H}_2.$ Fix an arbitrary $f \in \mathcal{H}_2$, so $f(z) = \begin{bmatrix} 1 \\ 1 \end{bmatrix} g(z_2),$
where $g \in H^2(\mathbb{D})$. Observe that, as $g$ is a function in $z_2$,
\[ S_{z_1} S_{z_1}^*   \begin{bmatrix} 1 \\ 1 \end{bmatrix} g(z_2) = 0.\]
Now we focus on $S_{z_1} f = P_{\Theta} z_1 f.$
Fix an arbitrary element 
\[ H(z) =  \begin{bmatrix} 1 \\ 1 \end{bmatrix} h(z_2) +  \begin{bmatrix} -1 \\ 1 \end{bmatrix} \tilde{h}(z_1) \in \mathcal{K}_{\Theta}, \] 
where $h, \tilde{h} \in H^2(\mathbb{D})$.  A simple computation shows that 
\[
\begin{aligned}
\left \langle z_1 f, H\right \rangle_{H^2} = 
\left \langle \begin{bmatrix} z_1 \\ z_1 \end{bmatrix} g(z_2), \begin{bmatrix} 1 \\ 1 \end{bmatrix} h(z_2) +  \begin{bmatrix} -1 \\ 1 \end{bmatrix} \tilde{h}(z_1) \right \rangle_{H^2} = \left \langle \begin{bmatrix} g(0) \\ g(0) \end{bmatrix}, \begin{bmatrix} -\tilde{h}'(0) \\ \tilde{h}'(0) \end{bmatrix} \right \rangle_{\mathbb{C}^2} = 0. 
\end{aligned}
\]
Thus, we can conclude that $S_{z_1} f  =0$ and hence $S_{z_1}^*S_{z_1} f \equiv0.$  As $f \in \mathcal{H}_2$ was arbitrary, this implies $[ S_{z_1}, S_{z_1}^*]|_{\mathcal{H}_2} \equiv 0.$}

\textnormal{
Let us compute $ [ S_{z_1}, S_{z_1}^*]$ on $\mathcal{H}_1.$ Fix an arbitrary $f \in \mathcal{H}_1$, so $ f = \begin{bmatrix} -1 \\ 1 \end{bmatrix} g(z_1),$
where $g \in H^2(\mathbb{D})$. It is easy to calculate
\[  [ S_{z_1}, S_{z_1}^*] f = \left( S_{z_1} S_{z_1}^*-S_{z_1}^* S_{z_1} \right)  \begin{bmatrix} -1 \\ 1 \end{bmatrix} g(z_1) =   \begin{bmatrix} -1 \\ 1 \end{bmatrix} \left( z_1 T_{\bar{z}_1} g - g \right) =     \begin{bmatrix} g(0) \\ -g(0) \end{bmatrix}. 
\]
From this, we can conclude that the image of $ [ S_{z_1}, S_{z_1}^*]$ on $\mathcal{H}_1$ is $\begin{bmatrix} -1 \\ 1 \end{bmatrix} \mathbb{C}$ and so, \[\Rank [ S_{z_1}, S_{z_1}^*]|_{\mathcal{H}_1}=1.\] Combining this with our result for $ [ S_{z_1}, S_{z_1}^*]|_{\mathcal{H}_2}$ implies that 
\[ \Rank [ S_{z_1}, S_{z_1}^*] =1.\]
We observe that $\deg_1 \Theta = 1$ and as $\det \Theta = 2 z_1 z_2$, we have $\deg_2 \det \Theta=1.$ Thus, Conjecture \ref{conjecture} says $ \Rank [ S_{z_1}, S_{z_1}^*] =1$, which agrees with our computed result.}

\end{example}
 
In our last example we consider what happens when  $\deg_1 \Theta > 1.$ In the scalar setting, this always causes  $ \Rank [ S_{z_1}, S_{z_1}^*] =\infty.$ Conjecture  \ref{conjecture} claims that the same holds true for matrix-valued inner functions. To test this conjecture, focus on the following example:
 
 \begin{example} \textnormal{Consider the matrix-valued function 
\[
 \Theta(z) = \frac{1}{2} \begin{bmatrix} 
z_1(z_1 +z_2) & z_1(z_1-z_2) \\
z_1-z_2 & z_1+z_2 
\end{bmatrix}.
\]
A simple computation shows that $\Theta$ is unitary-valued on $\mathbb{T}^2$ and hence, is inner. 
Observe that we can decompose the reproducing kernel of $\mathcal{K}_{\Theta}$ as follows
\begin{align*}
 &\frac{I - \Theta(z) \Theta(w)^*}{(1-z_1\bar{w}_1)(1-z_2\bar{w}_2)}= \\
 & \frac{\tfrac{1}{2}} {1-z_1\bar{w}_1} \begin{bmatrix} -z_1 \\ 1 \end{bmatrix} \begin{bmatrix} -\bar{w}_1 & 1 \end{bmatrix} + \frac{ \tfrac{1}{2}}{1-z_2 \bar{w}_2}  \begin{bmatrix} z_1 \\ 1 \end{bmatrix} \begin{bmatrix} \bar{w}_1 & 1 \end{bmatrix} + \frac{1}{1-z_2 \bar{w}_2} \begin{bmatrix} 1 \\0 \end{bmatrix} \begin{bmatrix} 1 & 0 \end{bmatrix}. 
 \end{align*}
This reproducing kernel decomposition induces the following orthogonal decomposition
\[ \mathcal{K}_{\Theta} =  \begin{bmatrix} -z_1 \\ 1 \end{bmatrix} H^2_1(\mathbb{D}) \oplus
 \begin{bmatrix} z_1 \\ 1 \end{bmatrix} H^2_2(\mathbb{D}) \oplus \begin{bmatrix} 1 \\0 \end{bmatrix} H^2_2(\mathbb{D}) = \mathcal{H}_1 \oplus \mathcal{H}_2 \oplus \mathcal{H}_3,\] 
 where $H^2_j(\mathbb{D})$ denotes the one variable Hardy space with independent variable $z_j.$
Consider $ [ S_{z_1}, S_{z_1}^*].$ As $\deg_1 \Theta = 2$, Conjecture \ref{conjecture} indicates that the rank of this operator should be infinite. To see why this is true, we consider $ [ S_{z_1}, S_{z_1}^*]|_{\mathcal{H}_3}.$ Fix an arbitrary $f \in \mathcal{H}_3$. Then $f(z) = \begin{bmatrix} 1 \\ 0 \end{bmatrix} g(z_2)$ for some $g\in H^2(\mathbb{D})$. It is immediate that
\[ S_{z_1} S^*_{z_1}   \begin{bmatrix} 1 \\ 0 \end{bmatrix} g(z_2) =0.\]
Now observe that 
\[z_1 f(z) =  \begin{bmatrix} z_1 \\ 0 \end{bmatrix} g(z_2) = \begin{bmatrix} z_1 \\ 1 \end{bmatrix} \frac{g(z_2)}{2} + \begin{bmatrix} z_1 \\ -1 \end{bmatrix} \frac{g(0)}{2} + \begin{bmatrix} z_1 \\ -1 \end{bmatrix} \frac{z_2 T_{\bar{z}_2} g(z_2)}{2}.\]
 The first two terms come from $\mathcal{H}_2$ and $\mathcal{H}_1$ respectively. Simple computations show that the 
 \[ \begin{bmatrix} z_1 \\ -1 \end{bmatrix} \frac{z_2 T_{\bar{z}_2} g(z_2)}{2} \perp \mathcal{H}_1 \oplus \mathcal{H}_2 \oplus \mathcal{H}_3 = \mathcal{K}_{\Theta}. \]
 Thus, we can compute:
 \[ S_{z_1}f = P_{\Theta} \begin{bmatrix} z_1 \\ 0 \end{bmatrix} g(z_2) = \begin{bmatrix} z_1 \\ 1 \end{bmatrix} \frac{g(z_2)}{2} + \begin{bmatrix} z_1 \\ -1 \end{bmatrix} \frac{g(0)}{2}. \] 
 Finally, we can conclude that
 \[ [ S_{z_1}, S_{z_1}^*] f = - S_{z_1}^*\left( \begin{bmatrix} z_1 \\ 1 \end{bmatrix} \frac{g(z_2)}{2} + \begin{bmatrix} z_1 \\ -1 \end{bmatrix} \frac{g(0)}{2}\right) = - \begin{bmatrix} 1 \\ 0 \end{bmatrix} \left( \frac{g(z_2) + g(0)}{2}\right).\]
 From this, it is clear that $\Rank [ S_{z_1}, S_{z_1}^*] = \infty.$}
 \end{example}

\bibliographystyle{amsalpha}

\begin{thebibliography}{12}

\bibitem{ag90}
J.~Agler.
\newblock On the representation of certain holomorphic functions defined on a
  polydisc.
\newblock In {\em Topics in operator theory: {E}rnst {D.} {H}ellinger memorial
  volume}, volume~48 of {\em Oper. Theory Adv. Appl.}, pages 47--66.
  {Birkh\"auser Verlag}, Basel, 1990.
  
    \bibitem{bsv05}
J.A. Ball, C.~Sadosky, and V.~Vinnikov.
\newblock Scattering systems with several evolutions and multidimensional
  input/state/output systems.
\newblock {\em Integral Equations and Operator Theory}, \textbf{52} (2005), 323--393.

\bibitem{b48}
A.  Beurling.
On two problems concerning linear transformations in {H}ilbert space. 
\emph{Acta Math.} \textbf{81}, (1948). 
%
  
\bibitem{bk13}
K. Bickel and G. Knese.
\newblock Inner functions on the bidisk and associated Hilbert spaces. 
\emph{J. Funct. Anal.}, \textbf{265} 
(2013) no. 11, 2753--2790. 


\bibitem{BLPreprint}
K. Bickel and C. Liaw.
\newblock Properties of Beurling-type submodules via Agler decompositions.
Submitted. 25 pages.


\bibitem{dg03}
R.G.~Douglas and M. Gadadhar.
Equivalence of quotient {H}ilbert modules. 
\emph{Proc. Indian Acad. Sci. Math. Sci.} 113 (2003), no. 3, 281--291. 

\bibitem{dg08}
R.G.~ Douglas and M. Gadadhar.
Equivalence of quotient {H}ilbert modules. II. 
\emph{Trans. Amer. Math. Soc.} \textbf{360} (2008), no. 4, 2229--2264. 

  \bibitem{RonRkN}
R.G.~Douglas and C.~Liaw.
A geometric approach to finite rank unitary perturbations.
\emph{Indiana Univ.~Math.~J.},~{\bf 62} (2013) no.~1, 333--354. 
  

\bibitem{dr00}
R.G. Douglas and R. Yang.
Operator theory in the Hardy space over the bidisk. I. 
\emph{Integral Equations Operator Theory},  \textbf{38} (2000), no. 2, 207--221. 


\bibitem{gw09}
K. Guo and K. Wang.
Beurling type quotient modules over the bidisk and boundary representations.
\emph{J. Funct. Anal.}, \textbf{257} (2009) 3218--3238.

\bibitem{ii06}
K. Izuchi and K.H.~Izuchi.
{Rank-one commutators on invariant subspaces of the Hardy space on the bidisk.}
\emph{J. Math. Anal. Appl.} \textbf{316} (2006), no. 1, 1--8. 

\bibitem{int04}
K. Izuchi and  T. Nakazi.
\emph{Backward shift invariant subspaces in the bidisc.}
\emph{Hokkaido Math. J.} \textbf{33} (2004), no. 1, 247--254. 


%

\bibitem{k11}
G. Knese. 
Kernel Decompositions for Schur Functions on the Polydisk.
\emph{Complex Anal. Oper. Theory,} \textbf{5} (2011) no. 4, 1093--1111. 

\bibitem{LT}
C.~Liaw and S.~Treil,
\emph{Clark model in general situation.} Accepted by 
J.~Anal.~Math. 

  \bibitem{Nik-Vas_model_MSRI_1998}
N.~Nikolski and V.~Vasyunin, \emph{Elements of spectral theory in
  terms of the free function model. {I}. {B}asic constructions}, Holomorphic
  spaces ({B}erkeley, {CA}, 1995), Math. Sci. Res. Inst. Publ., vol.~33,
  Cambridge Univ. Press, Cambridge, 1998, pp.~211--302.
  
  

\bibitem{Nik-Vas_FunctModels_1989}
N.~Nikolski{\u\i} and V.~Vasyunin, \emph{A unified approach
  to function models, and the transcription problem}, The {G}ohberg anniversary
  collection, {V}ol.\ {II} ({C}algary, {AB}, 1988), Oper. Theory Adv. Appl.,
  vol.~41, Birkh\"auser, Basel, 1989, pp.~405--434. 


\bibitem{Rud69}
W. Rudin
\emph{Function Theory in colydisks.}
W. A. Benjamin, Inc., New York-Amsterdam, 1969.


\bibitem{SzNF2010}
B.~Sz.-Nagy, C.~Foia{\c s}, H.~Bercovici, and L.~K{\'e}rchy, \emph{Harmonic
  analysis of operators on {H}ilbert space}, second ed., Universitext,
  Springer, New York, 2010. \MR{2760647}. Original edition: B.~Sz.-Nagy and C.~Foia{\c s}, \emph{Analyse harmonique des op\'erateurs de
  l'espace de {H}ilbert}, Masson et Cie, Paris, 1967. \MR{0225183 (37 \#778)}. Alternatively, translated from the French and revised, North-Holland
  Publishing Co., Amsterdam, 1970. \MR{0275190 (43 \#947)}

\bibitem{w10}
H.J. Woerdeman.
A general {Christoffel--Darboux} type formula.
\emph{Integral Equations Operator Theory.} \textbf{67} (2010), no. 2, 203--213. 

\bibitem{Y01}
R. Yang.
Operator theory in the {H}ardy space over the bidisk. III.
\emph{J. Funct. Anal.} \textbf{186} (2001), no. 2, 521--545. 

\bibitem{Y02}
R. Yang.
Operator theory in the {H}ardy space over the bidisk. II.
\emph{Integral Equations Operator Theory.} \textbf{42} (2002), no. 1, 99--124.


\end{thebibliography}

\end{document}